\newtheorem{theorem}{Theorem}[section]
\newtheorem{lemma}[theorem]{Lemma}
\theoremstyle{definition}
\theoremstyle{remark}
\theoremstyle{example}
\theoremstyle{fact}
\numberwithin{equation}{section}
\begin{document}
\title[Castelnuovo-Mumford regularity]{On the regularity of product of pure power complete intersections}
\author[Y.B. Gao]{Yubin Gao}
\address{School of Mathematics and Information Science, Shaanxi Normal
University, Xi'an 710062, PR China}
\email{gaoyb@snnu.edu.cn}
\subjclass[2010]{13D02}
\keywords{Castelnuovo-Mumford regularity, Complete intersection,
Product of ideals}

\begin{abstract}
 Let $I$ be a complete intersection in a polynomial ring over a field,
the Castelnuovo-Mumford regularity of $I^n$ is given by using an induction method. When $I$, $J$ and $K$ are three pure power
complete intersections, it is proved that $\mathrm{reg}(IJK)\leq \mathrm{reg}(I)+\mathrm{reg}(J)+\mathrm{reg}(K)$.
\end{abstract}

\maketitle
\date{}

\section{Introduction}
Let $S$ be a polynomial ring over a field $k$ and $\mathbbm{m}$ the maximal homogeneous ideal
of $S$. For a finitely generated graded $S$-module $M$, let
$a^i(M)=\max\{t\mid [H^i_\mathbbm{m}(M)]_t\neq 0\}$, and
$a^i(M)=-\infty$ if $H^i_\mathbbm{m}(M)=0$. The Castelnuovo-Mumford regularity of $M$ is defined as
\[\mathrm{reg}(M)=\max\limits_{i\geq 0}\{a^i(M)+i\}.\]
For a homogenous ideal $I$ of $S$, the highest degree of a generator of $IM$ is not more than the sum of the highest degree of a
generator of $M$ and that of a generator of $I$. So it is
natural to study whether $ \mathrm{reg}(IM)\leq \mathrm{reg}(I)+\mathrm{reg}(M)$ holds or not. When $ \mathrm{dim}(R/I)\leq 1$,
Conca and Herzog [2] proved that $ \mathrm{reg}(IM)\leq \mathrm{reg}(I)+\mathrm{reg}(M)$. However, even for
monomial ideals $I$ and $J$, it is not always true that $\mathrm{reg}(IJ)\leq\mathrm{reg}(I)+\mathrm{reg}(J)$.
Sturmfels [5] gave examples of monomial
ideals $I$ such that $ \mathrm{reg}(I^2)>2 \mathrm{reg}(I)$. If $I_1,\cdots,I_d$ are ideals generated by linear forms, Conca and Herzog [3] showed
that $\mathrm{reg}(I_1\cdots I_d)=d$, and they conjectured that
\[\mathrm{reg}(I_1\cdots I_d)\leq \mathrm{reg}(I_1)+\cdots+\mathrm{reg}(I_d)\]
for complete intersection ideals $I_1,\cdots,I_d$. When $d=2$, the statement is true by Theorem 3.1 in Chardin, Minh and Trung [1]; when
$d\geq 3$, this problem remains unsolved yet. Recently, Tang and Gong [6] shows that $\mathrm{reg}(I^n)\leq n \mathrm{reg}(I)$
for a monomial complete intersection $I$ and $n\geq 1$.

In this paper, when the minimal set of monomial generators of
a monomial complete intersection $I$ is $\{u_1,\cdots,u_s\}$ with $\deg(u_1)\geq\cdots\geq \deg(u_s)$, we show that
$\mathrm{reg}(I^n)=nu_1+\sum_{j=2}^su_j-(s-1)$. When $I$, $J$ and $K$ are three pure power monomial complete
intersections, it is proved that $\mathrm{reg}(IJK)\leq \mathrm{reg}(I)+\mathrm{reg}(J)+\mathrm{reg}(K)$.

Using the long exact sequence of local cohomology
 modules associated to a short exact sequence, one obtains the following lemma.
\begin{lemma} Let $0\rightarrow N \rightarrow M \rightarrow P \rightarrow 0$ be
a short exact sequence of finitely generated graded $S$-modules. Then\\
(i) $\mathrm{reg}(M)\leq \max\{\mathrm{reg}(N), \mathrm{reg}(P)\}$.\\
(ii) $\mathrm{reg}(P)\leq \max\{\mathrm{reg}(M), \mathrm{reg}(N)-1\}$.\\
(iii) $\mathrm{reg}(N)\leq \max\{\mathrm{reg}(M), \mathrm{reg}(P)+1\}$.\\
(iv) $\mathrm{reg}(P)=\mathrm{reg}(M)$ if $\mathrm{reg}(N)< \mathrm{reg}(M)$.\\
(v) $\mathrm{reg}(P)=\mathrm{reg}(N)-1$ if $\mathrm{reg}(M)<\mathrm{reg}(N)$.
\end{lemma}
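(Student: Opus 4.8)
The plan is to read everything off the graded long exact sequence of local cohomology modules attached to $0\rightarrow N \rightarrow M \rightarrow P \rightarrow 0$, namely
\[
\cdots \rightarrow H^{i-1}_\mathbbm{m}(P) \rightarrow H^i_\mathbbm{m}(N) \rightarrow H^i_\mathbbm{m}(M) \rightarrow H^i_\mathbbm{m}(P) \rightarrow H^{i+1}_\mathbbm{m}(N) \rightarrow \cdots,
\]
which is exact in each internal degree $t$. Since $\mathrm{reg}$ is governed by the invariants $a^i$, the entire argument reduces to one elementary observation: in a three-term exact sequence $A \rightarrow B \rightarrow C$ of graded vector spaces, $B_t\neq 0$ forces $A_t\neq 0$ or $C_t\neq 0$, so the top nonzero degree of the middle term is bounded by the maximum of those of the two outer terms.

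For part (i) I would apply this to the fragment $H^i_\mathbbm{m}(N) \rightarrow H^i_\mathbbm{m}(M) \rightarrow H^i_\mathbbm{m}(P)$, obtaining $a^i(M)\leq \max\{a^i(N),a^i(P)\}$; adding $i$ and maximizing over $i$ gives (i) immediately. For (ii) the relevant fragment is $H^i_\mathbbm{m}(M) \rightarrow H^i_\mathbbm{m}(P) \rightarrow H^{i+1}_\mathbbm{m}(N)$, yielding $a^i(P)\leq \max\{a^i(M),a^{i+1}(N)\}$; the key bookkeeping is that adding $i$ rewrites $a^{i+1}(N)+i$ as $(a^{i+1}(N)+(i+1))-1\leq \mathrm{reg}(N)-1$, which produces the $-1$ shift. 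For (iii) I would use $H^{i-1}_\mathbbm{m}(P) \rightarrow H^i_\mathbbm{m}(N) \rightarrow H^i_\mathbbm{m}(M)$, so $a^i(N)\leq \max\{a^i(M),a^{i-1}(P)\}$, and adding $i$ turns $a^{i-1}(P)+i$ into $(a^{i-1}(P)+(i-1))+1\leq \mathrm{reg}(P)+1$, giving the $+1$ shift.

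The equalities (iv) and (v) I would then derive purely formally from the three inequalities together with the hypotheses. For (iv), the assumption $\mathrm{reg}(N)<\mathrm{reg}(M)$ collapses the bound in (ii) to $\mathrm{reg}(P)\leq \mathrm{reg}(M)$, while (i) forces $\mathrm{reg}(M)\leq \mathrm{reg}(P)$ because $\mathrm{reg}(N)$ cannot realize the maximum on the right; combining gives equality. For (v), the assumption $\mathrm{reg}(M)<\mathrm{reg}(N)$ collapses (ii) to $\mathrm{reg}(P)\leq \mathrm{reg}(N)-1$ and (iii) to $\mathrm{reg}(N)\leq \mathrm{reg}(P)+1$, and these two combine to $\mathrm{reg}(P)=\mathrm{reg}(N)-1$.

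I do not anticipate a genuine obstacle, as the whole thing is a routine degree chase. The only point demanding care, and the one I would double-check, is the index bookkeeping in (ii) and (iii): one must verify that a connecting map shifting cohomological degree by one translates into precisely a $\mp 1$ shift in regularity with the correct sign. I would also keep the boundary conventions explicit, namely that $a^i(-)=-\infty$ whenever the corresponding local cohomology vanishes and that $H^i_\mathbbm{m}(-)=0$ for $i<0$, so that all the maxima above are well defined even at the extreme indices.
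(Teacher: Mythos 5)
Your argument is correct and is exactly the approach the paper takes: the paper offers no written proof of this lemma, stating only that it follows from the long exact sequence of local cohomology modules associated to the short exact sequence, which is precisely the degree chase you carry out, including the $\pm 1$ shifts from the connecting maps and the formal derivation of (iv) and (v) from (i)--(iii). Your attention to the $-\infty$ conventions is a welcome extra precision that the paper leaves implicit.
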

Let $x$ be a linear form in $S$ and $I$ a homogeneous ideal, by the exact sequence
\[0\rightarrow S/(I:x^n)(-n)\rightarrow S/I\rightarrow S/(I,x^n)\rightarrow 0,\]
we have the following lemma which is a slight generalization of Lemma 2.1 in
Hoa and Trung [4].
\begin{lemma}
Let $x$ be a linear form and $I$ a homogeneous ideal in a polynomial ring $S$ over
a field. Then $\mathrm{reg}(I)\leq\max\{\mathrm{reg}(I,x^n), \mathrm{reg}(I:x^n)+n\}$ for all $n\geq 1$.
\end{lemma}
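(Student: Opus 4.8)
The plan is to feed the short exact sequence displayed just above the statement directly into part (i) of Lemma 2.1, and then convert everything from quotient-ring regularity to ideal regularity via the standard identity $\mathrm{reg}(J)=\mathrm{reg}(S/J)+1$, valid for any proper homogeneous ideal $J\subseteq S$.

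First I would set $M=S/I$, $N=S/(I:x^n)(-n)$, and $P=S/(I,x^n)$, so that the sequence reads $0\to N\to M\to P\to 0$. The injectivity of the left-hand map, which is multiplication by $x^n$, is precisely the statement that $fx^n\in I$ forces $f\in(I:x^n)$, and its cokernel is visibly $S/(I,x^n)$; this justifies exactness. Applying Lemma 2.1(i) then gives
\[\mathrm{reg}(S/I)\leq\max\{\mathrm{reg}(N),\mathrm{reg}(P)\}.\]

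Next I would account for the twist. Since twisting by $-n$ raises regularity by $n$, we have $\mathrm{reg}(N)=\mathrm{reg}(S/(I:x^n))+n$, while $\mathrm{reg}(P)=\mathrm{reg}(S/(I,x^n))$. Substituting and then adding $1$ to both sides---using that adding a constant commutes with taking a maximum---turns the inequality into
\[\mathrm{reg}(S/I)+1\leq\max\{\mathrm{reg}(S/(I:x^n))+n+1,\ \mathrm{reg}(S/(I,x^n))+1\}.\]
Finally, rewriting each $\mathrm{reg}(S/J)+1$ as $\mathrm{reg}(J)$ yields exactly $\mathrm{reg}(I)\leq\max\{\mathrm{reg}(I,x^n),\mathrm{reg}(I:x^n)+n\}$, as desired.

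There is no serious obstacle here: the argument is a bookkeeping exercise once the two standard inputs---Lemma 2.1(i) and the identity $\mathrm{reg}(J)=\mathrm{reg}(S/J)+1$---are in hand. The only point requiring a little care is the twist $(-n)$ on the first term of the sequence, since forgetting it would lose the crucial $+n$ on the right-hand side. I note that the quoted Hoa--Trung result is the case $n=1$, and that replacing $x$ by $x^n$ changes nothing in the derivation beyond the bookkeeping of the shift, which is why the generalization is immediate.
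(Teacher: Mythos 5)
Your proof is correct and takes exactly the route the paper intends: apply part (i) of the exact-sequence lemma (Lemma 1.1 in the paper, which you cite as Lemma 2.1) to the displayed sequence $0\to S/(I:x^n)(-n)\to S/I\to S/(I,x^n)\to 0$, then translate via $\mathrm{reg}(J)=\mathrm{reg}(S/J)+1$ and the twist identity $\mathrm{reg}(N(-n))=\mathrm{reg}(N)+n$. The paper leaves all of this bookkeeping implicit, so your write-up simply fills in the details it omits.
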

\begin{lemma}
Let $u$ be a homogeneous polynomial of degree $d$ which is $S/I$-regular for a homogeneous ideal $I$. Then
$\mathrm{reg}(I,u)=\mathrm{reg}(I)+d-1$.
\end{lemma}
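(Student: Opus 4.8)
The plan is to pass to quotient rings and use the short exact sequence induced by multiplication by $u$. First I would record the standard comparison $\mathrm{reg}(J)=\mathrm{reg}(S/J)+1$ for every proper homogeneous ideal $J$ generated in positive degrees: applying the long exact sequence of local cohomology to $0\rightarrow J\rightarrow S\rightarrow S/J\rightarrow 0$, one has $\mathrm{reg}(S)=0<\mathrm{reg}(J)$, so Lemma 1.1(v) gives $\mathrm{reg}(S/J)=\mathrm{reg}(J)-1$. Since $d\geq 1$, both $I$ and $(I,u)$ are proper ideals generated in positive degrees, so the claimed identity $\mathrm{reg}(I,u)=\mathrm{reg}(I)+d-1$ is equivalent to $\mathrm{reg}(S/(I,u))=\mathrm{reg}(S/I)+d-1$, and it suffices to prove the latter.

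Next I would exploit that $u$ is $S/I$-regular. Multiplication by $u$ defines a degree-zero map $(S/I)(-d)\xrightarrow{\,\cdot u\,}S/I$; its injectivity is exactly the hypothesis that $u$ is a non-zerodivisor on $S/I$, and its image is $(I,u)/I$, so its cokernel is $S/(I,u)$. This produces the short exact sequence $0\rightarrow (S/I)(-d)\xrightarrow{\,\cdot u\,}S/I\rightarrow S/(I,u)\rightarrow 0$. Using the shift formula $\mathrm{reg}(M(-d))=\mathrm{reg}(M)+d$, which follows directly from $a^i(M(-d))=a^i(M)+d$, I obtain $\mathrm{reg}((S/I)(-d))=\mathrm{reg}(S/I)+d$.

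Now I would compare the two left-hand terms of this sequence. Because $d\geq 1$ and $\mathrm{reg}(S/I)$ is a finite integer (as $I\neq S$), there is a strict inequality $\mathrm{reg}(S/I)<\mathrm{reg}(S/I)+d=\mathrm{reg}((S/I)(-d))$; that is, the middle module $S/I$ has strictly smaller regularity than the submodule $(S/I)(-d)$. This is precisely the hypothesis of Lemma 1.1(v) applied with $N=(S/I)(-d)$, $M=S/I$ and $P=S/(I,u)$, which yields $\mathrm{reg}(S/(I,u))=\mathrm{reg}((S/I)(-d))-1=\mathrm{reg}(S/I)+d-1$. Combined with the first paragraph this gives $\mathrm{reg}(I,u)=\mathrm{reg}(S/(I,u))+1=\mathrm{reg}(S/I)+d=\mathrm{reg}(I)+d-1$, as desired.

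I expect the only delicate point to be verifying the hypothesis $\mathrm{reg}(M)<\mathrm{reg}(N)$ required to invoke Lemma 1.1(v): it rests entirely on $d\geq 1$ (the degenerate case $d=0$, in which $u$ would be a unit and $(I,u)=S$, being excluded) together with the shift formula raising regularity by exactly $d$. The construction of the short exact sequence and the ideal--quotient comparison are routine consequences of $u$ being $S/I$-regular and of Lemma 1.1(v), so no genuine obstacle arises beyond keeping the grading shift correct.
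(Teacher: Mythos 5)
Your proof is correct and follows essentially the same route as the paper: the same short exact sequence $0\rightarrow (S/I)(-d)\xrightarrow{\cdot u} S/I\rightarrow S/(I,u)\rightarrow 0$ together with the shift formula and the observation that $d\geq 1$ forces $\mathrm{reg}(S/I)<\mathrm{reg}((S/I)(-d))$. The only cosmetic difference is that you invoke Lemma 1.1(v) in one step (and spell out the translation $\mathrm{reg}(J)=\mathrm{reg}(S/J)+1$, which the paper leaves implicit), whereas the paper reproves the content of (v) inline via parts (ii) and (iii).
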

\begin{proof}
Since $u$ is $S/I$-regular, there is a short exact sequence of graded $S$-modules
\[0\rightarrow S/I(-d)\xrightarrow{.u}S/I\rightarrow S/(I,u)\rightarrow 0.\]
By Lemma 1.1,
\[\begin{array}{ll}
\mathrm{reg}(S/(I,u))\leq \max\{\mathrm{reg}(S/I(-d))-1,\mathrm{reg}(S/I)\}
=\mathrm{reg}(S/I)+d-1,\\
\mathrm{reg}(S/I(-d))\leq\max\{\mathrm{reg}(S/(I,u))+1,\mathrm{reg}(S/I)\}.
\end{array}\]
Since $\mathrm{reg}(S/I(-d))=\mathrm{reg}(S/I)+d$ and $d\geq 1$, we must have
$\mathrm{reg}(S/I)+d\leq \mathrm{reg}(S/(I,u))+1$. So
$\mathrm{reg}(S/(I,u))=\mathrm{reg}(S/I)+d-1$, that is, $\mathrm{reg}(I,u)=\mathrm{reg}(I)+d-1$.
\end{proof}

\section{Regularity of powers of monomial complete intersections}
Let $S$ be a polynomial ring over a field. An ideal $I=(u_1,\cdots,u_s)$ is called a complete
intersection if $u_1,\cdots,u_s$ is a regular sequence of $S$.
\begin{theorem}
Let $I$ be a complete intersection of $S$ with  minimal set of monomial generators
$\{u_1,\cdots,u_s\}$ such that $\mathrm{deg}(u_1)\geq\cdots\geq deg(u_s)$. Then for all $n\geq 1$,
\[\mathrm{reg}(I^n)=nu_1+\sum_{j=2}^su_j-(s-1).\]
\end{theorem}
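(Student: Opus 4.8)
Throughout write $d_i=\deg(u_i)$, so that $d_1=\max_i d_i$ and the asserted formula reads $\mathrm{reg}(I^n)=nd_1+\sum_{j=2}^s d_j-(s-1)$; equivalently, since $\mathrm{reg}(I)=\sum_i d_i-(s-1)$, it says $\mathrm{reg}(I^n)=\mathrm{reg}(I)+(n-1)d_1$. The plan is to prove this reformulation by induction on $n$. For the base case $n=1$ I would first record that a monomial complete intersection means $u_1,\dots,u_s$ are pairwise coprime, i.e.\ have pairwise disjoint supports, so each $u_i$ is a nonzerodivisor on $S/(u_1,\dots,u_{i-1})$. Starting from $\mathrm{reg}(S)=0$ and applying Lemma 1.3 $s$ times then gives $\mathrm{reg}(S/I)=\sum_{i=1}^s(d_i-1)$, that is $\mathrm{reg}(I)=\sum_i d_i-(s-1)$, which is exactly the formula at $n=1$.

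For the inductive step I would feed the conormal short exact sequence $0\to I^{n+1}\to I^{n}\to I^{n}/I^{n+1}\to 0$ into Lemma 1.1. The decisive input is the graded structure of the middle quotient: because $I$ is a complete intersection the associated graded ring is a polynomial ring, $\mathrm{gr}_I(S)\cong (S/I)[T_1,\dots,T_s]$ with $\deg T_i=d_i$, whence $I^{n}/I^{n+1}\cong\bigoplus_{|a|=n}(S/I)\bigl(-\sum_i a_i d_i\bigr)$ as graded $S$-modules. Taking the largest shift, which occurs at $a=(n,0,\dots,0)$ because $d_1$ is the largest degree, yields $\mathrm{reg}(I^{n}/I^{n+1})=\mathrm{reg}(S/I)+nd_1$. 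Under the induction hypothesis $\mathrm{reg}(I^{n})=r_n:=nd_1+\sum_{j\geq 2}d_j-(s-1)$ this value equals $r_n+(d_1-1)$.

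Assume first $d_1\geq 2$. Then $\mathrm{reg}(I^{n}/I^{n+1})=r_n+(d_1-1)>r_n=\mathrm{reg}(I^{n})$, so in Lemma 1.1(ii) the maximum cannot be attained by $\mathrm{reg}(I^{n})$, forcing $\mathrm{reg}(I^{n}/I^{n+1})\leq\mathrm{reg}(I^{n+1})-1$ and hence $\mathrm{reg}(I^{n+1})\geq r_n+d_1$. On the other hand Lemma 1.1(iii) gives $\mathrm{reg}(I^{n+1})\leq\max\{\mathrm{reg}(I^{n}),\mathrm{reg}(I^{n}/I^{n+1})+1\}=r_n+d_1$. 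Thus $\mathrm{reg}(I^{n+1})=r_n+d_1=r_{n+1}$, closing the induction. The degenerate case $d_1=1$, in which every $u_i$ is a variable and $I$ is an ideal generated by $s$ distinct variables, I would treat separately: there $I^{n}$ has a linear resolution and both sides equal $n$.

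The one step that carries real content, and the place I expect to spend effort, is the identification $I^{n}/I^{n+1}\cong\bigoplus_{|a|=n}(S/I)(-\sum_i a_i d_i)$. This is the classical fact that the associated graded ring of a complete intersection is a polynomial ring over $S/I$, and could simply be cited; but in the monomial case I would rather give a hands-on proof exploiting disjointness of supports. The point is that for monomials with pairwise disjoint supports, divisibility by $u^b=\prod_i u_i^{b_i}$ decouples block by block, so a monomial $w$ lies in $I^{N}$ precisely when $\sum_i\nu_i(w)\geq N$, where $\nu_i(w)$ denotes the exact power of $u_i$ dividing $w$. Consequently the monomials of $I^{n}$ that survive in $I^{n}/I^{n+1}$ are exactly those with $\sum_i\nu_i(w)=n$, and each such $w$ factors uniquely as $u^a m$ with $|a|=n$ and $m\notin I$; this is precisely a graded $S/I$-basis matching the asserted direct sum. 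The care needed here is to verify that no monomial $u^a m$ with $m\notin I$ slips into $I^{n+1}$ and that distinct pairs $(a,m)$ give distinct monomials, both of which follow from the additivity $\nu_i(u^a m)=a_i+\nu_i(m)$ supplied by disjoint supports.
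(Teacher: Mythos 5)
Your proof is correct, but it takes a genuinely different route from the paper's. The paper runs a double induction on $s$ and $n$: writing $I^n=u_1I^{n-1}+J^n$ with $J=(u_2,\dots,u_s)$, it feeds the short exact sequence $0\to u_1J^n\to u_1I^{n-1}\oplus J^n\to I^n\to 0$ into Lemma 1.1(iv)--(v) and then splits into cases according to $\deg(u_1)-\deg(u_2)$, with the boundary case $n=2$, $\deg(u_1)=\deg(u_2)+1$ needing a second, separate decomposition $I^2=u_2I+K^2$. You instead induct only on $n$, replacing all of this by the filtration $0\to I^{n+1}\to I^n\to I^n/I^{n+1}\to 0$ together with the classical Rees isomorphism $\mathrm{gr}_I(S)\cong (S/I)[T_1,\dots,T_s]$, which gives the exact value $\mathrm{reg}(I^n/I^{n+1})=\mathrm{reg}(S/I)+nd_1$; the two-sided squeeze via Lemma 1.1(ii) and (iii) then determines $\mathrm{reg}(I^{n+1})$ in one stroke when $d_1\geq 2$, and the degenerate case $d_1=1$ is the known linear case (the paper's own introduction cites Conca--Herzog for $\mathrm{reg}(I_1\cdots I_d)=d$ when the $I_i$ are generated by linear forms, which covers it). Your approach buys conceptual clarity and extra generality --- with the Rees theorem cited, the same argument proves the formula for arbitrary homogeneous complete intersections, not just monomial ones, and it eliminates the paper's delicate case analysis --- while the paper's argument stays elementary and self-contained at the level of monomial manipulations and the three lemmas of Section 1. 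One expository caveat in your hands-on verification of $I^n/I^{n+1}\cong\bigoplus_{|a|=n}(S/I)(-\sum_i a_id_i)$: the additivity $\nu_i(u^am)=a_i+\nu_i(m)$ is true but is not purely ``supplied by disjoint supports''; disjointness only disposes of the factors $u_j^{a_j}$ with $j\neq i$, and for the $i$-th block you still need the small computation $\nu_i(u_i^{a_i}m)=a_i+\nu_i(m)$, which follows from $\lfloor (a_ie+f)/e\rfloor=a_i+\lfloor f/e\rfloor$ applied variable by variable on the support of $u_i$ (try $u_i=x^2$, $m=x$ to see that something beyond disjointness is being used). This is a one-line fix, not a gap, and the rest of your argument --- the base case via iterated use of Lemma 1.3, the computation $\mathrm{reg}(I^n/I^{n+1})-r_n=d_1-1$, and the deduction $\mathrm{reg}(I^{n+1})=r_n+d_1$ from Lemma 1.1(ii),(iii) --- is sound.
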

\begin{proof}
We use induction on $s$ and $n$. For $s=1$ and any $n$, the
formula is obviously right; if $n=1$, the formula is already well known to be true for any $s$.
Now assume $s\geq 2$ and $n\geq 2$, let $J=(u_2,\cdots,u_s)$ and $u=u_1$ for convenience. Then
$I^n=(u,J)I^{n-1}=uI^{n-1}+J(u,J)^{n-1}=uI^{n-1}+J^n$ and there is  a short exact sequence
\[0\rightarrow uI^{n-1}\cap J^n\rightarrow uI^{n-1}\oplus J^n\rightarrow I^n\rightarrow 0.\]
Using induction hypothesis, we have
\[\begin{array}{ll}
\mathrm{reg}(uI^{n-1})=\deg(u)+\mathrm{reg}(I^{n-1})=n\deg(u)+\sum_{j=2}^s\deg(u_j)-(s-1).\\
\mathrm{reg}(J^n)=n\deg(u_2)+\sum_{j=3}^s\deg(u_j)-(s-2).
\end{array}\]
Since $\deg(u)\geq \deg(u_2)$, we get that $\mathrm{reg}(uI^{n-1})\geq \mathrm{reg}(J^n)$, so
\[\alpha=\mathrm{reg}(uI^{n-1}\oplus J^n)=\max\{\mathrm{reg}(uI^{n-1}),
\mathrm{reg}(J^n)\}=n\mathrm{deg}(u)+\sum_{j=2}^s\deg(u_j)-(s-1).\]
Note that $u$ is $S/J^n$-regular, so $uI^{n-1}\cap J^n=u[I^{n-1}\cap (J^n:u)]=u(I^{n-1}\cap J^n)
=uJ^n$. By induction on the cardinality of the minimal set of monomial generators, we have
\[\beta=\mathrm{reg}(uI^{n-1}\cap J^n)=\mathrm{reg}(uJ^n)=\deg(u)+n\deg(u_2)+\sum_{j=3}^s\deg(u_j)-(s-2),\]
then $\beta-\alpha=(n-1)(\deg(u_2)-\deg(u))+1$, note that we assume $n\geq 2$ here.
If $\deg(u)=\deg(u_2)$, then $\alpha<\beta$, and by Lemma 1.1$(v)$, we have
$\mathrm{reg}(I^n)=\beta-1=\alpha$; If $\deg(u)\geq\deg(u_2)+2$, then $\beta< \alpha$ and
$\mathrm{reg}(I^n)=\alpha$ by Lemma 1.1$(iv)$; If $\deg(u)=\deg(u_2)+1$ and $n>2$, then
 $\beta< \alpha$ and $\mathrm{reg}(I^n)=\alpha$.

 Now only one case remains to be considered, that is, $n=2$ and $\deg(u)=\deg(u_2)+1$.
 Set $K=(u,\hat{u_2},u_3,\cdots,u_s)$, where $\hat{u_2}$ denotes that $u_2$ is removed
 from the set of generators. We have $I^2=(u_2,K)^2=u_2I+K^2$ and there is an exact sequence
 \[0\rightarrow u_2I\cap K^2\rightarrow u_2I\oplus K^2\rightarrow I^2\rightarrow 0.\]
As above, we have $u_2I\cap K^2=u_2K^2$. Using induction hypothesis, we have
\[\begin{array}{ll}
\mathrm{reg}(u_2K^2)=\deg(u_2)+2\deg(u)+\sum_{j=3}^s\deg(u_j)-(s-2),\\
\mathrm{reg}(u_2I)=\deg(u_2)+\deg(u)+\sum_{j=2}^s\deg(u_j)-(s-1),\\
\mathrm{reg}(K^2)=2\deg(u)+\sum_{j=3}^s\deg(u_j)-(s-2).
\end{array}\]
Since $\mathrm{reg}(u_2K^2)$ is greater than both $\mathrm{reg}(u_2I)$ and $\mathrm{reg}(K^2)$, by Lemma 1.1(v), we
have $\mathrm{reg}(I^2)=\mathrm{reg}(u_2K^2)-1=\alpha$. Now the theorem is proved.
\end{proof}

\section{Regularity of product of three pure power complete intersections}
An ideal $I$ of $S$ is a pure power complete intersection if the elements of the minimal
set of monomial generators of $I$ are all powers of only one variable. The main techniques
we use are from Lemma 1.2.
\begin{lemma}
Let $I$, $J$ and $K$ be three pure power complete intersections. Then
\[\mathrm{reg}(IJ+IK+JK)\leq \mathrm{reg}(I)+\mathrm{reg}(J)+\mathrm{reg}(K)-1.\]
\end{lemma}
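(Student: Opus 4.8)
The plan is to induct on the number of variables occurring in $I$, $J$ and $K$, using Lemma 1.2 as the main tool (applied twice), together with two elementary facts about pure power complete intersections. The first is the regularity formula $\mathrm{reg}(I)=\sum_j\deg(u_j)-(s-1)$, i.e. the $n=1$ case of Theorem 2.1. The second is the sub-additivity of regularity under sums: for pure power complete intersections $A,B$ one has $\mathrm{reg}(A+B)\le\mathrm{reg}(A)+\mathrm{reg}(B)-1$. This follows immediately from the formula, since writing both ideals through their pure power generators gives $\mathrm{reg}(A)+\mathrm{reg}(B)-\mathrm{reg}(A+B)=\sum_y\max(\deg_y A,\deg_y B)-t+1$, where the sum runs over the $t$ variables $y$ dividing a generator of each ideal; as each maximum is at least $1$, this quantity is at least $1$. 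Iterating, $\mathrm{reg}(A+B+C)\le\mathrm{reg}(A)+\mathrm{reg}(B)+\mathrm{reg}(C)-2$.

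Write $L=IJ+IK+JK$. Since the statement is symmetric, I fix a variable $x$ and relabel so that, among the ideals whose generators involve $x$, the ideal $I$ carries the least $x$-exponent $a=\deg_x(I)$; put $b=\deg_x(J)$ and $c=\deg_x(K)$ when defined, so $a\le b$ and $a\le c$. Let $I',J',K'$ be obtained by deleting the power of $x$ from the generating set (so $J'=J$ if $x$ divides no generator of $J$, and similarly for $K'$); these are pure power complete intersections in the remaining variables, and the formula yields $\mathrm{reg}(I')=\mathrm{reg}(I)-a+1$ and the inequalities $\mathrm{reg}(J')\le\mathrm{reg}(J)$, $\mathrm{reg}(K')\le\mathrm{reg}(K)$. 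Applying Lemma 1.2 to $L$ with the power $x^a$ gives $\mathrm{reg}(L)\le\max\{\mathrm{reg}(L,x^a),\ \mathrm{reg}(L:x^a)+a\}$, and I bound the two terms separately.

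For the first term, no minimal generator of $L$ has $x$-degree strictly between $0$ and $a$, so reduction modulo $x^a$ annihilates every generator involving $x$ and leaves $(L,x^a)=(x^a)+(I'J'+I'K'+J'K')$. Since $x$ is regular on $S/(I'J'+I'K'+J'K')$, Lemma 1.3 gives $\mathrm{reg}(L,x^a)=\mathrm{reg}(I'J'+I'K'+J'K')+a-1$, and the induction hypothesis (in one fewer variable) bounds the inner regularity by $\mathrm{reg}(I')+\mathrm{reg}(J')+\mathrm{reg}(K')-1$. Substituting $\mathrm{reg}(I')=\mathrm{reg}(I)-a+1$ and $\mathrm{reg}(J')\le\mathrm{reg}(J)$, $\mathrm{reg}(K')\le\mathrm{reg}(K)$ yields $\mathrm{reg}(L,x^a)\le\mathrm{reg}(I)+\mathrm{reg}(J)+\mathrm{reg}(K)-1$. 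If one of $I',J',K'$ vanishes the inner ideal degenerates to a single product of two ideals, and the known two-ideal bound $\mathrm{reg}(J'K')\le\mathrm{reg}(J')+\mathrm{reg}(K')$ (Theorem 3.1 of [1]) replaces the induction hypothesis, giving the same conclusion.

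The colon term is the crux, and I expect it to demand the most care. A direct monomial computation shows $L:x^a=x^{\,b-a}I+J'+K'$ (with the evident simplifications when $x$ divides generators of only one, or none, of $J,K$). This ideal is not a complete intersection, so I bound its regularity by a second application of Lemma 1.2, now with $x^{\,b-a}$: one computes $(L:x^a):x^{\,b-a}=(x^a)+I'+J'+K'$ and $((L:x^a),x^{\,b-a})=(x^{\,b-a})+J'+K'$, both of which are genuine pure power complete intersections. Evaluating their regularities by the additive formula ($x$ being disjoint from the variables of $I',J',K'$) and feeding in the sub-additivity fact applied to $I'+J'+K'$ and to $J'+K'$, both quantities are at most $\mathrm{reg}(I)+\mathrm{reg}(J)+\mathrm{reg}(K)-1-a$; hence $\mathrm{reg}(L:x^a)+a\le\mathrm{reg}(I)+\mathrm{reg}(J)+\mathrm{reg}(K)-1$. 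The degenerate case $b=a$ needs no second split, since there $L:x^a=(x^a)+I'+J'+K'$ is already a pure power complete intersection. Combining the two estimates through Lemma 1.2 closes the induction; the steps to watch are the explicit determination of $L:x^a$ and the verification that its two reductions are complete intersections.
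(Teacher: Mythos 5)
Your proposal is correct and follows essentially the same route as the paper's proof: you colon out the smallest $x$-power $x^a$ so that the branch $(L,x^a)=(I'J'+I'K'+J'K',x^a)$ is handled by Lemma 1.3 plus the induction hypothesis, you identify $L:x^a=J'+K'+x^{\min(b,c)-a}I$ and split it a second time via Lemma 1.2 into $(J',K',x^{\min(b,c)-a})$ and $(I',J',K',x^a)$, and you bound these pure power complete intersections by the sub-additivity $\mathrm{reg}(A+B)\le \mathrm{reg}(A)+\mathrm{reg}(B)-1$, which is exactly what the paper invokes as Herzog's Corollary 3.2 (you re-derive it from the degree formula) --- this is precisely the paper's cases \ding{172}--\ding{174} in unified form, including its double application of Lemma 1.2. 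The only cosmetic differences are that you induct on the number of variables rather than on the total number of minimal generators, and your exponent $b-a$ should strictly read $\min(b,c)-a$ unless you also relabel so that $b\le c$; neither affects correctness.
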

\begin{proof}
We use induction on $l_1+l_2+l_3$, where $l_1, l_2,$ and $l_3$ are the cardinality of the minimal set
of generators of $I$, $J$ and $K$ respectively. If $I$, $J$ and $K$ are all generated by one variable,
say $I=(x^l)$, $J=(y^m)$ and $K=(z^n)$ with $l\geq m\geq n$ and $x\neq y\neq z$.
Then $IJ+IK+JK=(x^ly^m,x^lz^n,y^mz^n)$. By Lemma 1.2,
\begin{align*}
\mathrm{reg}((IJ,IK,JK))\leq&\max\{\mathrm{reg}((IJ,IK,JK,z^n)),\mathrm{reg}(((IJ,IK,JK):z^n))+n\}\\
=&\max\{\mathrm{reg}((x^ly^m,z^n)), \mathrm{reg}((x^l,y^m))+n\}\\
=&l+m+n-1=\mathrm{reg}(I)+\mathrm{reg}(J)+\mathrm{reg}(K)-1.
\end{align*}
The cases that $x=y$ or $x=y=z$ can be considered similarly.

\ding {172} If $I=(I_1, x^m)$ and $x$ is a non-zero-divisor on $S/I_1$, $S/J$ and $S/K$, i.e.,
there is no power of $x$ in the minimal set of monomial generators of $I_1$, $J$ and $K$.
Then
\begin{align*}
(IJ,IK,JK)=&(I_1,x^m)J+(I_1,x^m)K+JK\\
=&I_1J+I_1K+JK+x^mJ+x^mK.
\end{align*}
By Lemma 1.2, we have
\begin{align*}
\mathrm{reg}((IJ,IK,JK))\leq&\max\{\mathrm{reg}((IJ,IK,JK,x^m)), \mathrm{reg}((IJ,IK,JK):x^m)+m\}\\
=&\max\{\mathrm{reg}((I_1J,I_1K,JK,x^m)),\mathrm{reg}((J,K))+m\}.
\end{align*}
Note that $x$ is $S/(I_1J,I_1K,JK)$-regular, then by Lemma 1.3 and induction hypothesis,
we have $\mathrm{reg}((I_1J,I_1K,JK,x^m))=\mathrm{reg}((I_1J,I_1K,JK))+m-1\leq \mathrm{reg}(I_1)+\mathrm{reg}(J)+\mathrm{reg}(K)+m-2
=\mathrm{reg}(I)+\mathrm{reg}(J)+\mathrm{reg}(K)-1$ because of $\mathrm{reg}(I)=\mathrm{reg}(I_1)+m-1$. By Corollary 3.2 in Herzog [3],
$\mathrm{reg}((J,K))+m\leq \mathrm{reg}(J)+\mathrm{reg}(K)+m-1$. So in this case,
$\mathrm{reg}((IJ,IK,JK))\leq \mathrm{reg}(I)+\mathrm{reg}(J)+\mathrm{reg}(K)-1$.

\ding {173} If $I=(I_1,x^m), J=(J_1,x^n)$ with $m\geq n$ and $x$ is $S/K$-regular. Then
\begin{align*}
(IJ,IK,JK)=&(I_1,x^m)(J_1,x^n)+(I_1,x^m)K+(J_1,x^n)K\\
=&I_1J_1+I_1K+J_1K+x^nI_1+x^mJ_1+x^nK+(x^{m+n}).
\end{align*}
By using Lemma 1.2 twice, we have
\begin{align*}
\mathrm{reg}((IJ,IK,JK))\leq&\max\{\mathrm{reg}((I_1J_1,I_1K,J_1K,x^n)), \mathrm{reg}((I_1,K,x^{m-n}J_1,x^m))+n\}\\
\leq&\max\{\mathrm{reg}((I_1J_1,I_1K,J_1K,x^n)),\mathrm{reg}((I_1,K,x^{m-n}))+n,\\
& \mathrm{reg}((I_1,J_1,K,x^n))+m\}.
\end{align*}
By the induction hypothesis and the fact that $x^n$ is $S/(I_1J_1,I_1K,J_1K)$-regular,
\begin{equation*}
\mathrm{reg}((I_1J_1,I_1K,J_1K,x^n))\leq \mathrm{reg}(I_1)+\mathrm{reg}(J_1)+\mathrm{reg}(K)+n-2.
\end{equation*}
By Corollary 3.2 in Herzog [3], we have
\begin{align*}
\mathrm{reg}((I_1,K,x^{m-n}))+n\leq&\mathrm{reg}(I_1)+\mathrm{reg}(K)+m-2;\\
\mathrm{reg}((I_1,J_1,K,x^n))+m\leq&\mathrm{reg}(I_1)+\mathrm{reg}(J_1)+\mathrm{reg}(K)+m+n-3.
\end{align*}
By Lemma 1.3, $\mathrm{reg}(I)=\mathrm{reg}(I_1)+m-1$ and $\mathrm{reg}J)=\mathrm{reg}(J_1)+n-1$. So we have
$\mathrm{reg}((IJ,IK,JK))\leq \mathrm{reg}(I)+\mathrm{reg}(J)+\mathrm{reg}(K)-1$ in this case from the above inequations.

\ding {174} If $I=(I_1,x^m), J=(J_1,x^n)$ and $K=(K_1,x^s)$ with $m\geq n\geq s\geq 1$. Then
we have
\[(IJ,IK,JK)=(I_1J_1,I_1K_1,J_1K_1,x^sI_1,x^sJ_1,x^nK_1,x^{n+s}).\]
As the above two cases, we have
\begin{align*}
\mathrm{reg}((IJ,IK,JK))\leq&\max\{\mathrm{reg}((I_1J_1,I_1K_1,J_1K_1,x^s)), \mathrm{reg}((I_1,J_1,x^{n-s}K_1,x^n))+s\}\\
\leq& \max\{\mathrm{reg}((I_1J_1,I_1K_1,J_1K_1,x^s)), \mathrm{reg}((I_1,J_1,x^{n-s}))+s,\\
&\mathrm{reg}((I_1,J_1,K_1,x^s))+n\}.
\end{align*}
By the induction hypothesis and Lemma 1.3,
\[\mathrm{reg}((I_1J_1,I_1K_1,J_1K_1,x^s))\leq \mathrm{reg}(I_1)+\mathrm{reg}(J_1)+\mathrm{reg}(K_1)+s-2.\]
By Corollary 3.2 in Herzog [3],
\begin{align*}
\mathrm{reg}((I_1,J_1,x^{n-s}))+s\leq& \mathrm{reg}(I_1)+\mathrm{reg}(J_1)+n-2,\\
\mathrm{reg}((I_1,J_1,K_1,x^s))+n\leq& \mathrm{reg}(I_1)+\mathrm{reg}(J_1)+\mathrm{reg}(K_1)+s+n-3.
\end{align*}
By Lemma 1.3, $\mathrm{reg}(I)+\mathrm{reg}(J)+\mathrm{reg}(K)-1=\mathrm{reg}(I_1)+\mathrm{reg}(J_1)+\mathrm{reg}(K_1)+m+n+s-4$. So in this case, we also
have $\mathrm{reg}((IJ,IK,JK))\leq \mathrm{reg}(I)+\mathrm{reg}(J)+\mathrm{reg}(K)-1$. Now the lemma is proved.

\end{proof}

The following theorem answers the question (1.1) raised in Conca and Herzog [2] in the case
that $d=3$ and $I_i$ are all pure power complete intersections.
\begin{theorem}
Let $I$, $J$ and $K$ be three pure power complete intersections in a polynomial ring $S$ over
a field $k$. Then
\[\mathrm{reg}(IJK)\leq \mathrm{reg}(I)+\mathrm{reg}(J)+\mathrm{reg}(K).\]
\end{theorem}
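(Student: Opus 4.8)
Set $R=\mathrm{reg}(I)+\mathrm{reg}(J)+\mathrm{reg}(K)$. The plan is to deduce the theorem from Lemma 3.1 through a single short exact sequence. Since $IJK\subseteq IJ+IK+JK$, we have
\[0\to IJK\to IJ+IK+JK\to Q\to 0,\qquad Q=(IJ+IK+JK)/IJK,\]
and Lemma 1.1(iii) gives $\mathrm{reg}(IJK)\le\max\{\mathrm{reg}(IJ+IK+JK),\,\mathrm{reg}(Q)+1\}$. By Lemma 3.1 the first argument is at most $R-1$, so everything reduces to proving $\mathrm{reg}(Q)\le R-1$; granting this, $\mathrm{reg}(IJK)\le\max\{R-1,R\}=R$.

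To understand $Q$ I would first bound its three natural submodules. Writing $\overline{IJ},\overline{IK},\overline{JK}$ for the images of $IJ,IK,JK$ in $Q$, one has $Q=\overline{IJ}+\overline{IK}+\overline{JK}$, and since $IJK\subseteq IJ$ there is an isomorphism $\overline{IJ}\cong IJ/IJK=IJ/K\!\cdot\!IJ$. Because $K$ is a pure power complete intersection, its generators form a regular sequence which is a non-zero-divisor sequence on the monomial ideal $IJ$; iterating the module form of Lemma 1.3 (namely $\mathrm{reg}(N/zN)=\mathrm{reg}(N)+\deg z-1$ when $z$ is a non-zero-divisor on $N$) along this sequence gives $\mathrm{reg}(\overline{IJ})=\mathrm{reg}(IJ)+\mathrm{reg}(K)-1$. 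Together with the case $d=2$, i.e. $\mathrm{reg}(IJ)\le\mathrm{reg}(I)+\mathrm{reg}(J)$ (Theorem 3.1 of [1]), this yields $\mathrm{reg}(\overline{IJ})\le R-1$, and symmetrically $\mathrm{reg}(\overline{IK}),\mathrm{reg}(\overline{JK})\le R-1$.

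It remains to pass from the three summands to their sum $Q$. Assembling $Q$ one submodule at a time and applying Lemma 1.1 to $0\to\overline{IJ}\cap\overline{IK}\to\overline{IJ}\oplus\overline{IK}\to\overline{IJ}+\overline{IK}\to0$ and its analogue for the third summand, the bound $\mathrm{reg}(Q)\le R-1$ follows once the pairwise overlaps $\overline{IJ}\cap\overline{IK}=(IJ\cap IK)/IJK$, and the corresponding triple overlap, are shown to have regularity at most $R$. Controlling these overlaps is the step I expect to be the main obstacle: the modules $(IJ\cap IK)/IJK$ again involve products of the ideals, so they are not governed directly by the case $d=2$. I would resolve them by the same mechanism that powers Lemma 3.1 — induction on the total number $l_1+l_2+l_3$ of minimal generators, peeling off the highest pure power $x^m$ of a single variable via Lemma 1.2 and splitting into the three configurations according to how many of $I,J,K$ involve $x$, where the colon ideals produce auxiliary ideals of the shape $(I_1,J_1,x^{t})$ handled by Lemma 1.3 and Corollary 3.2 of [3]. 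In fact the same induction can be carried out directly on $\mathrm{reg}(IJK)$, never forming $Q$ at all; the base case (all three ideals principal) gives equality $\mathrm{reg}(IJK)=R$, and this is probably the most economical route.
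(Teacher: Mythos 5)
Your reduction via $0\to IJK\to IJ+IK+JK\to Q\to 0$ and Lemma 1.1(iii) is sound as far as it goes, and feeding Lemma 3.1 into that sequence is a genuinely different use of it than the paper makes; but the two steps that would complete your route both break down. First, the claim that the generators of $K$ form a regular sequence on the module $IJ$ is false whenever $K$ shares variables with $I$ or $J$ --- which is exactly the hard configuration the theorem must handle. Take $I=J=K=(x,y)$ in $k[x,y]$: then $IJ=(x,y)^2$ has depth $1$ (apply the depth lemma to $0\to IJ\to S\to S/IJ\to 0$), so no length-two regular sequence on $IJ$ exists at all; concretely, in $IJ/xIJ$ one has $y\cdot\overline{xy}=0$ while $\overline{xy}\neq 0$, since $xy\notin(x^3,x^2y,xy^2)$. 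Hence your identity $\mathrm{reg}(\overline{IJ})=\mathrm{reg}(IJ)+\mathrm{reg}(K)-1$, which rests on iterating Lemma 1.3 along that sequence, is unjustified, and with it the bound $\mathrm{reg}(\overline{IJ})\le R-1$. (When the variables of $K$ are disjoint from those of $I$ and $J$ your computation is fine, but that is the easy case.)

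Second, and more fundamentally, you correctly identify the overlap modules $\overline{IJ}\cap\overline{IK}=(IJ\cap IK)/IJK$ as the main obstacle, and then dispose of them by asserting that ``the same induction can be carried out directly on $\mathrm{reg}(IJK)$.'' That assertion \emph{is} the theorem: the entire content of the paper's proof is the execution of that induction, and it is not routine. The paper splits into three cases according to whether a variable $x$ occurs in the generators of one, two, or all three ideals; in the hardest case it writes $IJK=(I_1J_1K_1,x^sI_1J_1,x^nI_1K_1,x^mJ_1K_1,x^{n+s}I_1,x^{m+s}J_1,x^{m+n}K_1,x^{m+n+s})$, peels off $x^{n+s}$ via Lemma 1.2 (with a further split into $m\le n+s$ and $m>n+s$), and iterates the colon trick. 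Lemma 3.1 enters there in a specific place --- to control the colon term $\mathrm{reg}((I_1J_1,I_1K_1,J_1K_1,x^{n+s-m}))+m$ via Lemma 1.3 --- while the remaining terms need Corollary 3.2 of [3] and the $d=2$ case from [1]. None of this case analysis appears in your proposal, so as written the argument has one false regularity computation and one unproved reduction, and the fallback you name is the paper's proof, not a proof you have given.
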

\begin{proof}
We use induction on $l_1+l_2+l_3$, where $l_1, l_2,$ and $l_3$ are the cardinality of the minimal set
of generators of $I$, $J$ and $K$ respectively. If $l_1=l_2=l_3=1$, the assertion of the theorem is
clearly right since $\mathrm{reg}(IJK)=l_1+l_2+l_3$.

\ding {172} If one variable $x$ of $S$ appears only in the minimal monomial generators of $I$, not in
 those of $J$ and $K$. Set $I=(I_1, x^m)$ with $m\geq 1$, then
 $IJK=I_1JK+x^mJK$ and $x^m$ is $S/I_1JK$-regular. By Lemma 1.2 and Lemma 1.3,
\begin{align*}
\mathrm{reg}(IJK)\leq&\max\{\mathrm{reg}((I_1JK,x^m)),\mathrm{reg}((I_1JK,x^mJK):x^m)+m\}\\
=&\max\{\mathrm{reg}(I_1JK)+m-1,\mathrm{reg}(JK)+m\}.
\end{align*}
By induction hypothesis, we have
\[\mathrm{reg}(I_1JK)+m-1\leq \mathrm{reg}(I_1)+\mathrm{reg}(J)+\mathrm{reg}(K)+m-1=\mathrm{reg}(I)+\mathrm{reg}(J)+\mathrm{reg}(K).\]
By Lemma 3.1 in Chardin, Minh and Trung [1], $\mathrm{reg}(JK)+m\leq \mathrm{reg}(J)+\mathrm{reg}(K)+m\leq \mathrm{reg}(I)+\mathrm{reg}(J)+\mathrm{reg}(K)$. So the conclusion
of the theorem is true in this case.

\ding {173} If one variable $x$ appears in the minimal monomial generators of $I$ and $J$, not in those
of $K$, set $I=(I_1,x^m)$ and $J=(J_1,x^n)$ with $m\geq n$. Then
\[IJK=(I_1J_1K, x^nI_1K, x^mJ_1K, x^{m+n}K).\]
By Lemma 1.2,
\begin{align*}
\mathrm{reg}(IJK)\leq& \max\{\mathrm{reg}((IJK,x^m)), \mathrm{reg}((IJK:x^m))+m\},\\
=&\max\{\mathrm{reg}((I_1J_1K,x^nI_1K,x^m)), \mathrm{reg}((I_1K,J_1K,x^nK))+m \} ,
\end{align*}
For the last two terms above, we have
\begin{align*}
\mathrm{reg}((I_1J_1K,x^nI_1K,x^m))\leq&\max\{\mathrm{reg}((I_1J_1K,x^n)),\mathrm{reg}((I_1K,x^{m-n}))+n\},\\
\mathrm{reg}((I_1K,J_1K,x^nK))+m\leq&\max\{\mathrm{reg}((I_1K,J_1K,x^n))+m,\mathrm{reg}(K)+m+n\}.
\end{align*}
By induction hypothesis and the fact that $x$ dose not appear in the
minimal set of monomial generators of $I_1$, $J_1$ and $K$,
\begin{align*}
\mathrm{reg}((I_1J_1K,x^n))\leq& \mathrm{reg}(I_1)+\mathrm{reg}(J_1)+\mathrm{reg}(K)+n-1\\
=&\mathrm{reg}(I_1)+\mathrm{reg}(J)+\mathrm{reg}(K),
\end{align*}
and
\begin{align*}
\mathrm{reg}((I_1K,x^{m-n}))+n=&\mathrm{reg}(I_1K)+m-1\\
\leq& \mathrm{reg}(I_1)+\mathrm{reg}(K)+m-1\\
=& \mathrm{reg}(I)+\mathrm{reg}(K).
\end{align*}
Note that $I_1+J_1$ is still a pure power monomial complete intersection, by
Corollary 3.2 in Herzog [3], we have
\begin{align*}
\mathrm{reg}((I_1K,J_1K,x^n))+m=&\mathrm{reg}((I_1,J_1)K)+m+n-1\\
\leq& \mathrm{reg}(I_1,J_1)+\mathrm{reg}(K)+m+n-1\\
\leq& \mathrm{reg}(I_1)+\mathrm{reg}(J_1)+\mathrm{reg}(K)+m+n-2\\
=& \mathrm{reg}(I)+\mathrm{reg}(J)+\mathrm{reg}(K).
\end{align*}
Now the theorem is proved in this case by the above inequations.

\ding {174} If the variable $x$ appears in the minimal monomial generators of $I$, $J$,
and $K$, set $I=(I_1,x^m)$, $J=(J_1,x^n)$ and $K=(K_1,x^s)$ with $m\geq n\geq s\geq 1$.
Then
\[IJK=(I_1J_1K_1,x^sI_1J_1,x^nI_1K_1,x^mJ_1K_1,x^{n+s}I_1,x^{m+s}J_1,x^{m+n}K_1,x^{m+n+s}).\]
We assume that $m\leq n+s$ first. By Lemma 1.2,
\begin{align}
\mathrm{reg}(IJK)\leq& \max\{\mathrm{reg}((IJK,x^{n+s})), \mathrm{reg}((IJK:x^{n+s}))+n+s\},\nonumber\\
=&\max\{\mathrm{reg}((I_1J_1K_1,x^sI_1J_1,x^nI_1K_1,x^mJ_1K_1,x^{n+s})),\\
&\mathrm{reg}((I_1,J_1K_1,x^{m-n}J_1,x^{m-s}K_1,x^m ))+n+s\}.
\end{align}
For the term in (3.1), we have
\begin{align*}
&\mathrm{reg}((I_1J_1K_1,x^sI_1J_1,x^nI_1K_1,x^mJ_1K_1,x^{n+s}))\\
\leq&\max\{\mathrm{reg}((I_1J_1K_1,x^sI_1J_1,x^nI_1K_1,x^m)),
\mathrm{reg}((I_1J_1,I_1K_1,J_1K_1,x^{n+s-m}))+m\}\\
\leq&\max\{\mathrm{reg}((I_1J_1K_1,x^s)),\mathrm{reg}((I_1J_1,x^{n-s}I_1K_1,x^{m-s}))+s,\\
&\mathrm{reg}((I_1J_1,I_1K_1,J_1K_1,x^{n+s-m}))+m\}\\
\leq&\max\{\mathrm{reg}((I_1J_1K_1,x^s)),\mathrm{reg}((I_1J_1,x^{n-s}))+s,\mathrm{reg}((I_1J_1,I_1K_1,x^{m-n}))+n,\\
&\mathrm{reg}((I_1J_1,I_1K_1,J_1K_1,x^{n+s-m}))+m\}.
\end{align*}
As the first two cases, one can show that $\mathrm{reg}((I_1J_1K_1,x^s)),\mathrm{reg}((I_1J_1,x^{n-s}))$ and
$\mathrm{reg}((I_1J_1,I_1K_1,x^{m-n}))+n$ are all not more than $\mathrm{reg}(I)+\mathrm{reg}(J)+\mathrm{reg}(K)$.
We only check the last term in the above equation. By Lemma 3.1,
we have
\begin{align*}
&\mathrm{reg}((I_1J_1,I_1K_1,J_1K_1,x^{n+s-m}))+m\\
=& \mathrm{reg}((I_1J_1,I_1K_1,J_1K_1))+n+s-1\\
\leq&\mathrm{reg}(I_1)+\mathrm{reg}(J_1)+\mathrm{reg}(K_1)+n+s-2\\
=& \mathrm{reg}(I_1)+\mathrm{reg}(J)+\mathrm{reg}(K)\leq \mathrm{reg}(I)+\mathrm{reg}(J)+\mathrm{reg}(K).
\end{align*}
For the term in (3.2), by Lemma 1.2,
\begin{align*}
&\mathrm{reg}((I_1,J_1K_1,x^{m-n}J_1,x^{m-s}K_1,x^m ))+n+s\\
\leq&\max\{\mathrm{reg}((I_1,J_1K_1,x^{m-n}))+n+s,\mathrm{reg}((I_1,J_1,x^{n-s}K_1,x^n))+m+s\}\\
\leq&\max\{\mathrm{reg}((I_1,J_1K_1,x^{m-n}))+n+s,\mathrm{reg}((I_1,J_1,x^{n-s}))+m+s,\\
&\mathrm{reg}((I_1,J_1,K_1,x^s))+m+n\}.
\end{align*}
We only look at the third term in the inequation above, the other two can be
checked similarly.
\begin{align*}
\mathrm{reg}((I_1,J_1,K_1,x^s))+m+n=&\mathrm{reg}((I_1,J_1,K_1))+m+n+s-1\\
\leq& \mathrm{reg}(I_1)+\mathrm{reg}(J_1)+\mathrm{reg}(K_1)+m+n+s-3\\
=& \mathrm{reg}(I)+\mathrm{reg}(J)+\mathrm{reg}(K).
\end{align*}
Now for the case that $m\leq n+s$, we have shown that $\mathrm{reg}(IJK)\leq
\mathrm{reg}(I)+\mathrm{reg}(J)+\mathrm{reg}(K)$. If $m> n+s$, one can get the conclusion
 by using the same method as above, and we omit the details here.
Now the theorem is proved.
\end{proof}
\vspace{0.5cm}

\vspace{0.2cm}

\end{document}